\documentclass[11pt]{article}
\usepackage{amsmath}
\usepackage{amssymb}
\usepackage{amsthm}
\usepackage{enumerate}
\usepackage{color}
\usepackage{xfrac}
\usepackage{bbm}
\usepackage{hyperref}
\usepackage{mathrsfs}
\usepackage{enumerate}

\newtheorem{thm}{Theorem}[section]

\newtheorem{lemma}[thm]{Lemma}

\newtheorem{rem}[thm]{Remark}
\newtheorem{cor}[thm]{Corollary}

\begin{document}

\title{Inequalities for ratios of Gamma functions}

\author{Henrik L. Pedersen}

\date{\today}
\maketitle

\begin{abstract}
We prove that the function 
$$
\log \frac{\Gamma(x+a+b)\Gamma(x)}{\Gamma(x+a)\Gamma(x+b)}-b\log\left(1+\frac{a}{x}\right)
$$
is a generalized Stieltjes function of order $3$ for $a>0$ and $0<b<1$. A similar result is obtained when $a>0$ and $b>1$.
\end{abstract}
\noindent {\em \small 2020 Mathematics Subject Classification: Primary: 33B15, Secondary: 44A10, 26A48}

\noindent {\em \small Keywords:  Ratios of Gamma functions, Generalized Stieltjes functions}

\section{Introduction and results}
The motivation for this paper stems partly from \cite{DSS}, in which the authors study ratios of Beta functions. Numerous estimates and inequalities are obtained, and in particular it is proved that the inequality  (\cite[Theorem 2.2]{DSS}) 
\begin{equation}
\label{eq:paper}
\frac{B(\beta,y)}{B(\alpha,y)}\leq \left(\frac{\alpha}{\beta}\right)^y
\end{equation}
holds for $0<\alpha<\beta$ and $0\leq y\leq 1$.
The purpose of the present paper is to strengthen this inequality by investigating the difference between the logarithm of these two expressions.

We shall relate the expressions in \eqref{eq:paper} to ratios of Gamma functions. 
The Beta function $B$ is defined as 
$$
B(x,y)=\int_0^1t^{x-1}(1-t)^{y-1}\, dt, \ x,y>0,
$$
and the fundamental relation to Euler's Gamma function is
$$
B(x,y)=\frac{\Gamma(x)\Gamma(y)}{\Gamma(x+y)}.
$$ Putting $x=\alpha$, $a=\beta-\alpha$, $b=y$ the inequality \eqref{eq:paper} can be restated as
\begin{equation}
\label{eq:rewrite}
  \frac{\Gamma(x+a+b)\Gamma(x)}{\Gamma(x+a)\Gamma(x+b)}\geq \left(1+\frac{a}{x}\right)^b  
\end{equation}
for all $x>0$, $a>0$ and $b\in [0,1]$. Several authors have investigated this kind of ratios of Gamma functions. In one of the first papers \cite{BI} it was obtained that the left hand side of \eqref{eq:rewrite} is a (logarithmically) completely monotonic function for $a,b>0$. We shall strengthen this result in Corollary \ref{cor:lcm}. The logarithmic complete monotonicity of the left-hand side of \eqref{eq:rewrite} was later strengthened in \cite{BKP}, where it was obtained  that the logarithm of the left-hand side is a so-called generalized Stieltjes function of order $2$. More general ratios have been investigated in \cite{KarpPrilepkina} and also in the framework of entire functions of finite order, see \cite{AP}.

We shall also formulate our results in terms of generalized Stieltjes functions of positive order. For the reader's convenience these and related classes are introduced below.

A completely monotonic function $g$ is an infinitely often differentiable function defined on $(0,\infty)$ such that $(-1)^ng^{(n)}\geq 0$ on $(0,\infty)$ for all $n\geq 0$. Bernstein's theorem states that $g$ is completely monotonic if and only if there exist a positive measure $\mu$ on $[0,\infty)$ and $c\geq 0$ such that, for $x>0$, 
$$
g(x)=c+\int_0^{\infty}e^{-xt}\, d\mu(t)=c+\mathcal L(\mu)(x).
$$
(Here, $\mathcal L$ denotes the Laplace transform.) A function $h:(0,\infty)\to (0,\infty)$ is called logarithmically completely monotonic if $-h'/h$ is completely monotonic. The class of logarithmically completely monotonic functions  was introduced and characterized by Horn, see \cite{H} and it is a subclass of the completely monotonic functions.

A function $f:(0,\infty)\to \mathbb R$ is called a generalized Stieltjes function of order $\lambda>0$ if
$$
f(x)=c+\int_0^{\infty}\frac{d\nu (t)}{(t+x)^{\lambda}}, \quad x>0,
$$
for a real non-negative constant $c$, and a positive measure $\nu$ on $[0,\infty)$ such that the integral converges for all $x>0$. This class is denoted by $\mathcal S_{\lambda}$. It is well-known (and not difficult to prove) that $f\in \mathcal S_{\lambda}$ if and only if
$$
f(x)=c+\mathcal L(t^{\lambda-1}\varphi(t))(x)
$$
for some $c\geq 0$ and a completely monotonic function $\varphi$. From this characterization it follows that $\mathcal S_{\lambda}\subseteq \mathcal S_{\mu}$ for $0<\lambda\leq \mu$. For an introduction to completely monotonic functions and the Laplace transform, see e.g.\ \cite{W} and \cite{S}. More information on generalized Stieltjes functions can be found in e.g.\ \cite{Sokal} and \cite{KP}.

We define 
\begin{equation}
    \label{eq:phi}
\phi_{a,b}(x)=\log \frac{\Gamma(x+a+b)\Gamma(x)}{\Gamma(x+a)\Gamma(x+b)}- b\log \left(1+\frac{a}{x}\right), \quad a,b>0.
\end{equation}
The relation \eqref{eq:paper} can be cast as $\phi_{a,b}(x)\geq 0$ for $x>0$, $a>0$ and $b\in [0,1]$. We shall strengthen this in Theorem \ref{thm:main}. (Notice that $\phi_{a,1}\equiv 0$.)
\begin{thm}
    \label{thm:main} The following assertions hold. 
    \begin{enumerate}[(a)]
        \item The function $\phi_{a,b}$ is a generalized Stieltjes function of order 3 for $a>0$ and $0< b<1$.  
    \item The function $-\phi_{a,b}$ is a generalized Stieltjes function of order 3 for $a>0$ and $b>1$. 
    \end{enumerate}
\end{thm}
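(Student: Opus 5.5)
Since $\phi_{a,b}(x)\to 0$ as $x\to\infty$, the constant $c$ in the characterization of $\mathcal S_3$ must be $0$. The plan is therefore to show that $\phi_{a,b}=\mathcal L(t^2\varphi(t))$ with $\varphi$ completely monotonic when $0<b<1$, and with $-\varphi$ completely monotonic when $b>1$.

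\textbf{Step 1: Laplace representation.} I will use the standard formula
$$\log\Gamma(x+s)-\log\Gamma(x)=\int_0^\infty \Big(s e^{-t}-\frac{e^{-xt}(1-e^{-st})}{1-e^{-t}}\Big)\frac{dt}{t},$$
together with the Frullani-type identity $\log(1+a/x)=\int_0^\infty e^{-xt}(1-e^{-at})\,dt/t$. The $se^{-t}$ terms cancel, because $(a+b)-a-b=0$. Since $(1-e^{-(a+b)t})-(1-e^{-at})-(1-e^{-bt})=-(1-e^{-at})(1-e^{-bt})$, this gives
$$\phi_{a,b}(x)=\int_0^\infty e^{-xt}\,\frac{1-e^{-at}}{t}\Big(\frac{1-e^{-bt}}{1-e^{-t}}-b\Big)\,dt .$$
Hence $\phi_{a,b}=\mathcal L(t^2\varphi)$ with
$$\varphi(t)=\frac{1-e^{-at}}{t}\cdot\frac{1}{t^2}\Big(\frac{1-e^{-bt}}{1-e^{-t}}-b\Big).$$

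\textbf{Step 2: Factor into completely monotonic pieces.} Products of completely monotonic functions are completely monotonic, so it suffices to handle each factor. The factor $(1-e^{-at})/t=\int_0^a e^{-ut}\,du$ is completely monotonic. It remains to show that
$$\psi_b(t)=\frac{1}{t^2}\Big(\frac{1-e^{-bt}}{1-e^{-t}}-b\Big)$$
is completely monotonic for $0<b<1$, and that $-\psi_b$ is completely monotonic for $b>1$. This is the main obstacle.

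To attack it, I will write $h_b(t)=(1-e^{-bt})/(1-e^{-t})$. This function is even in the sense that $h_b(t)-b$ vanishes at $t=0$; the expansion is $h_b(t)=b+\tfrac{b(1-b)}{2}t+O(t^2)$, so $\psi_b(t)\sim b(1-b)/(2t)$ near $0$, which already shows the sign change at $b=1$. Since dividing by $t$ is harmless for complete monotonicity (it is $\int_0^\infty e^{-ut}\,du$, and products preserve the class), it suffices to show that
$$\frac{h_b(t)-b}{t}$$
is completely monotonic, with the appropriate sign. For this I will represent it as a Stieltjes-type or Laplace transform with an explicit sign-definite density. One route is the partial fraction expansion of $1/(1-e^{-t})$, i.e.\ $\frac{1}{1-e^{-t}}=\frac12+\frac1t+\sum_{k\ge1}\frac{2t}{t^2+4\pi^2k^2}$, multiplied by $1-e^{-bt}$. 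The alternative route, which I would try first, is to write the key piece as a function of $t$ and check that it is a Laplace transform: $(h_b(t)-b)/t=\int_0^\infty e^{-ts}\rho_b(s)\,ds$.

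To compute $\rho_b$, I will expand $h_b(t)=\sum_{n\ge0}(e^{-nt}-e^{-(n+b)t})$, so that
$$\frac{h_b(t)-b}{t}=\int_0^\infty e^{-ts}\Big(\sum_{n\ge0}\big(\mathbf 1_{[n,n+b)}(s)\big)-b\Big)\,ds .$$
Thus $\rho_b(s)=\#\{n\ge 0: n\le s<n+b\}-b$. For $0<b<1$ this function is $1-b$ on $[n,n+b)$ and $-b$ elsewhere. It is not nonnegative, so the factor $1/t$ must be absorbed. I will therefore instead consider $\psi_b=\mathcal L(\sigma_b)$ with $\sigma_b(s)=\int_0^s\rho_b(u)\,du$, which is a periodic-type sawtooth that is $\ge0$ for $b<1$; this is the standard argument. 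For $b>1$ the same computation, with counting multiplicities, gives $\sigma_b\le 0$. The nonnegativity check of $\sigma_b$ is the crux: on $[0,1)$ the integral rises to $b(1-b)$ and then returns to $0$ at $s=1$, and by periodicity it stays $\ge 0$ everywhere. By Bernstein's theorem this makes $\psi_b$ completely monotonic, and combining with Step 2 proves (a) and (b).
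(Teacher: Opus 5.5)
Your proof is correct, and it takes a different and much shorter route than the paper.

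\textbf{The paper's route.} The paper integrates the sign-changing $\mathcal S_2$ density $g_{a,b}-bh_a$ by parts to get $\phi_{a,b}(x)=2\int_0^\infty (G_{a,b}-bH_a)(t)\,(t+x)^{-3}\,dt$. It then uses the recursion and symmetry relations of Lemma~\ref{lemma:reduction}, together with $\xi_{\alpha,\beta},\eta_{\alpha,\beta}\in\mathcal S_2$, to reduce to the ranges $0<a<b<1$ and $0<a<1<b<1+a$. Finally it checks the sign of $G_{a,b}-bH_a$ on one period by piecewise-polynomial case analysis (with Lemma~\ref{lemma:pos}) and invokes periodicity.

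\textbf{Your route.} Factoring the Laplace density as $t^2\cdot\frac{1-e^{-at}}{t}\cdot\psi_b(t)$ decouples the parameters. All sign information sits in $\psi_b=\mathcal L(\sigma_b)$, and $a$ enters only through the completely monotonic factor $\mathcal L(1_{[0,a]})$. In the paper's notation this is the identity $g_{a,b}-bh_a=1_{(0,a)}\ast\rho_b$, hence $G_{a,b}-bH_a=1_{(0,a)}\ast\sigma_b$. For $0<b<1$, $\sigma_b(s)=\min(b,\{s\})-b\{s\}$ (with $\{s\}=s-\lfloor s\rfloor$) is a nonnegative triangle wave. So the positivity the paper establishes case by case is immediate, for every $a>0$ at once and with no reductions. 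The same formula explains the relation $G(1+a)-bH(1+a)=G(a)-bH(a)$ used there.

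Equivalently, $\phi_{a,b}(x)=F_b(x)-F_b(x+a)$ with
\[
F_b(x)=b\log x-\log\frac{\Gamma(x+b)}{\Gamma(x)}=\mathcal L(t\psi_b)(x).
\]
Here $F_b\in\mathcal S_2$ for $0<b<1$ and $-F_b\in\mathcal S_2$ for $b>1$, and $f\in\mathcal S_2$ implies $f(x)-f(x+a)\in\mathcal S_3$. What the paper's computation adds is explicit values of $G_{a,b}-bH_a$ at its local minima, which your convolution formula also gives directly.

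\textbf{Two small points.}

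First, for $b>1$ you only assert $\sigma_b\le 0$. Since $\rho_b$ is then periodic only for $s\ge b-1$, your periodicity argument does not transfer verbatim. The fix is short. Write $s=m+r$ with $m=\lfloor s\rfloor$ and $0\le r<1$; then
\[
\sigma_b(s)=\sum_{j=0}^{m}\min(b,r+j)-b(m+r).
\]
For $b\ge 1$ the $j=0$ term equals $r$ and the others are at most $b$, so $\sigma_b(s)\le (1-b)r\le 0$. The same formula gives $\sigma_b(s)=\min(b,r)-br\ge 0$ for $b\le 1$.

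Second, your intermediate reduction to complete monotonicity of $t\psi_b(t)$ is false. By uniqueness of the Laplace transform it would force $\rho_b\ge 0$. You rightly abandon it in favour of $\sigma_b$, so nothing depends on it.
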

Introducing the function 
$$
R_{a,b}(x)=\frac{\Gamma(x+a+b)\Gamma(x)x^b}{\Gamma(x+a)\Gamma(x+b)(x+a)^b},
$$
we notice the following corollary.
\begin{cor}
\label{cor:lcm}
    For $a>0$ and $0<b<1$, $R_{a,b}$ is logarithmically completely monotonic; for $a>0$ and $b>1$, $1/R_{a,b}$ is logarithmically completely monotonic. 
\end{cor}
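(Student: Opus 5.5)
The plan is to derive the corollary directly from Theorem \ref{thm:main}, using the observation that $\log R_{a,b}=\phi_{a,b}$. Indeed, all Gamma factors are positive for $x>0$, $a,b>0$, so $R_{a,b}$ maps $(0,\infty)$ into $(0,\infty)$. Taking logarithms in the definition of $R_{a,b}$ and comparing with \eqref{eq:phi}, the term $x^b/(x+a)^b$ contributes $-b\log(1+a/x)$. Hence
$$
\log R_{a,b}(x)=\phi_{a,b}(x),\qquad -\frac{R_{a,b}'(x)}{R_{a,b}(x)}=-\phi_{a,b}'(x).
$$
Likewise $1/R_{a,b}$ is positive and satisfies $-(1/R_{a,b})'/(1/R_{a,b})=\phi_{a,b}'$. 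So it suffices to show that $-f'$ is completely monotonic whenever $f\in\mathcal S_3$. This is then applied to $f=\phi_{a,b}$ for $0<b<1$ (part (a) of Theorem \ref{thm:main}) and to $f=-\phi_{a,b}$ for $b>1$ (part (b)).

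For the key step, I would use the Laplace characterization recalled in the introduction. If $f\in\mathcal S_3$, then $f(x)=c+\mathcal L(t^{2}\varphi(t))(x)$ with $c\ge 0$ and $\varphi$ completely monotonic, in particular $\varphi\ge 0$. Differentiating under the integral sign gives
$$
-f'(x)=\int_0^\infty e^{-xt}\,t^{3}\varphi(t)\,dt .
$$
This is the Laplace transform of the positive measure $t^3\varphi(t)\,dt$, so it is completely monotonic by Bernstein's theorem. Alternatively, one can work from the Stieltjes form $f(x)=c+\int_0^\infty (t+x)^{-3}\,d\nu(t)$. This gives $-f'(x)=3\int_0^\infty (t+x)^{-4}\,d\nu(t)$, and $(-1)^n$ times its $n$-th derivative is a positive combination of $\int (t+x)^{-4-n}\,d\nu(t)\ge 0$.

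The only point requiring care, and the closest thing to an obstacle, is justifying differentiation under the integral sign. For $x\ge x_0>0$, each derivative of $(t+x)^{-3}$ is dominated by a constant times $(t+x_0)^{-3}$, because $(t+x)^{-3-n}\le x_0^{-n}(t+x_0)^{-3}$. The integral $\int (t+x_0)^{-3}\,d\nu(t)$ converges by the definition of $\mathcal S_3$, so dominated convergence applies on each half-line $[x_0,\infty)$. In the Laplace form the argument is the same, using $t^{n}e^{-xt}\le C_n e^{-x_0 t/2}$ for $x\ge x_0$.

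Combining these steps: for $0<b<1$, $-R_{a,b}'/R_{a,b}=-\phi_{a,b}'$ is completely monotonic, so $R_{a,b}$ is logarithmically completely monotonic. For $b>1$, $-(1/R_{a,b})'/(1/R_{a,b})=-(-\phi_{a,b})'$ is completely monotonic, so $1/R_{a,b}$ is logarithmically completely monotonic.
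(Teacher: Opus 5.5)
Your proposal is correct and follows the paper's argument. Like the paper, you identify $\log R_{a,b}=\phi_{a,b}$ and invoke Theorem \ref{thm:main}. The paper only remarks that $\pm\phi_{a,b}\in\mathcal S_3$ is completely monotonic, so its negative derivative is too; you instead check directly from the integral representation that $\mp\phi_{a,b}'$ is completely monotonic, and you supply the dominated-convergence justification that the paper leaves implicit.
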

The proof of this corollary consists of noticing that by Theorem \ref{thm:main}, $\phi_{a,b}$ is completely monotonic for $0<b<1$, and $-\phi_{a,b}$ is completely monotonic for $b>1$.

In \cite[Proposition 4.1]{BKP} is it proved that 
\begin{align*}
\log \frac{\Gamma(x+a+b)\Gamma(x)}{\Gamma(x+a)\Gamma(x+b)}&=\int_0^{\infty}e^{-xt}\varphi_{a,b}(t)\, dt\\
&=\int_0^{\infty}\frac{g_{a,b}(t)}{(t+x)^2}\, dt,
\end{align*}
where 
\begin{align*}
\varphi_{a,b}(t)&=\frac{(1-e^{-at})(1-e^{-bt})}{t(1-e^{-t})}
\end{align*}
and
\begin{align*}
g_{a,b}(t)&
=\sum_{k=0}^{[t]}\left(1_{(0,a)}\ast1_{(0,b)}\right)(t-k)
\end{align*}
for all positive parameters $a$ and $b$. (Here, $1_M$ denotes the indicator function of the set $M$.) 
In particular, 
$$
\log \frac{\Gamma(x+a+b)\Gamma(x)}{\Gamma(x+a)\Gamma(x+b)}\in \mathcal S_2.
$$
Furthermore, 
$$
b\log \left(1+\frac{a}{x}\right)=b\int_0^{\infty}\frac{h_a(t)}{(t+x)^2}\, dt,
$$
with 
$$
h_a(t)=\left\{ 
\begin{array}{ll}
t, &0\leq t \leq a,\\
a,&t\geq a.
\end{array}
\right.
$$
From the representations above we have 
\begin{equation}
\label{eq:phi-int}
\phi_{a,b}(x)=\int_0^{\infty}\frac{g_{a,b}(t)-bh_a(t)}{(t+x)^2}dt.
\end{equation}
Figure \ref{fig:graph3} shows the graph of $g_{a,b}-bh$ for a specific choice of parameters $a$ and $b$.
\begin{figure}
    \centering
    \includegraphics[width=0.5\linewidth]{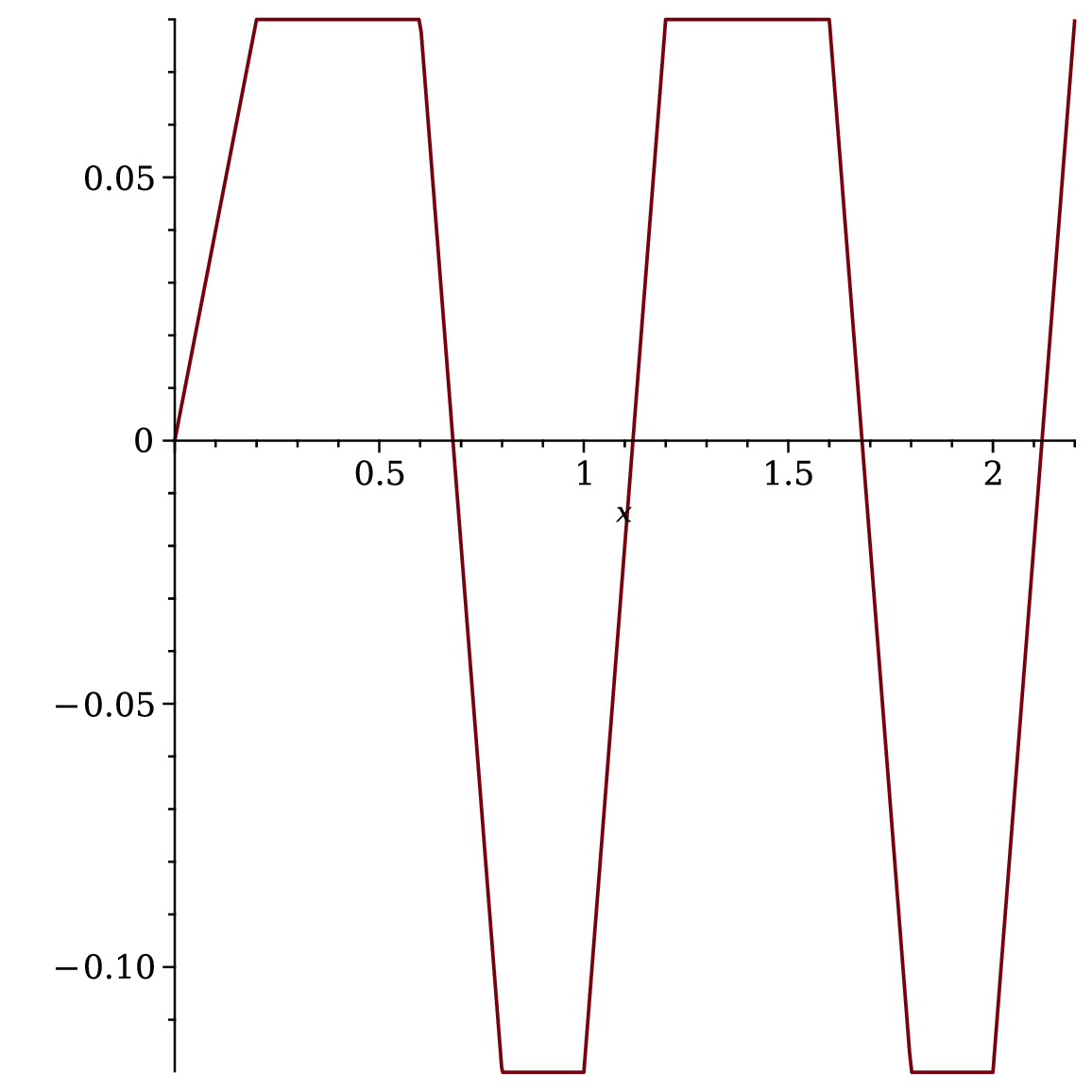}
    \caption{$g_{a,b}-bh$ for $a=0.2$ and $b=0.6$}
    \label{fig:graph3}
\end{figure}
It illustrates that in general the function $g_{a,b}-bh_a$ may change its sign on the positive line so $\phi_{a,b}$ does not belong to $\mathcal S_2$. As a difference of two generalized Stieltjes functions the monotonicity properties of $\phi_{a,b}$ are not directly obtainable, but as Theorem \ref{thm:main} shows it is possible control the cancellation. 

The asymptotic behavior of the Gamma function term is well-known (and can be obtained by partial integration):
$$
\log \frac{\Gamma(x+a+b)\Gamma(x)}{\Gamma(x+a)\Gamma(x+b)}=\int_0^{\infty}e^{-xt}\varphi_{a,b}(t)\, dt=\frac{ab}{x}+\frac{ab(1-a-b)}{2x^2}+\cdots.
$$
We have also 
$$
b\log\left(1+\frac{a}{x}\right)=\frac{ab}{x}-\frac{ba^2}{2x^2}+\cdots,
$$
and this shows that $x^2\phi_{a,b}(x)\to ab(1-b)/2$ as $x\to \infty$. Hence, the elementary function $b\log(1+a/x)$ approximates the first term in \eqref{eq:phi} up to order 2, as $x$ tends to infinity, and their difference is a generalized Stieltjes function of order 3. At the same time the difference has (only) a logarithmic singularity at the origin.

\section{Preliminary results}
In the proof of Theorem \ref{thm:main} some elementary lemmas are needed, and these are given in this section. It is convenient to introduce two auxiliary functions, namely
\begin{align*}
   \xi_{\alpha,\beta}(x)&=\alpha \log(1+\beta/x)-\beta\log(1+\alpha/x)\\
   \eta_{\alpha,\beta}(x)&=\log(1+\alpha/x)-\log(1+\alpha/(x+\beta))
\end{align*}
\begin{lemma}
    \label{lemma:xi}
    The function $\xi_{\alpha,\beta}$ belongs to $\mathcal S_2$ when $0\leq \beta\leq \alpha$. The function $\eta_{\alpha,\beta}$ belongs to $\mathcal S_2$ when $0\leq \alpha, \beta$. 
\end{lemma}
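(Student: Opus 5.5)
We represent both functions explicitly in the form $\int_0^\infty \rho(t)(t+x)^{-2}\,dt$ and check that the density $\rho$ is nonnegative. The starting point is the identity already used in the introduction:
$$
\log\left(1+\frac{c}{x}\right)=\int_0^{\infty}\frac{h_c(t)}{(t+x)^2}\,dt,\qquad c\ge 0,
$$
with $h_c(t)=\min(t,c)$. One checks it by noting that both sides vanish at $\infty$ and have derivative $-c/(x(x+c))$: differentiating the right-hand side gives $-2\int_0^\infty h_c(t)(t+x)^{-3}dt$, and after integrating by parts once this equals $-\int_0^c (t+x)^{-2}dt$.

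For $\xi_{\alpha,\beta}$, linearity gives
$$
\xi_{\alpha,\beta}(x)=\int_0^\infty\frac{\alpha h_\beta(t)-\beta h_\alpha(t)}{(t+x)^2}\,dt.
$$
For $0\le\beta\le\alpha$ we compute the density $\alpha\min(t,\beta)-\beta\min(t,\alpha)$ piecewise:
\begin{itemize}
\item on $[0,\beta]$ it equals $\alpha t-\beta t=(\alpha-\beta)t\ge 0$;
\item on $[\beta,\alpha]$ it equals $\alpha\beta-\beta t=\beta(\alpha-t)\ge 0$;
\item on $[\alpha,\infty)$ it equals $\alpha\beta-\beta\alpha=0$.
\end{itemize}
So the density is a nonnegative bounded function with compact support. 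The measure $\nu$ with this density therefore gives a convergent integral for every $x>0$. With $c=0$ we get $\xi_{\alpha,\beta}\in\mathcal S_2$. The degenerate cases $\beta=0$ (where $\xi\equiv 0$) are included.

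For $\eta_{\alpha,\beta}$, substitute $x\mapsto x+\beta$ in the basic identity:
$$
\log\left(1+\frac{\alpha}{x+\beta}\right)=\int_0^\infty\frac{h_\alpha(t)}{(t+\beta+x)^2}\,dt=\int_\beta^\infty\frac{h_\alpha(s-\beta)}{(s+x)^2}\,ds.
$$
Hence
$$
\eta_{\alpha,\beta}(x)=\int_0^\infty\frac{h_\alpha(s)-h_\alpha(s-\beta)1_{[\beta,\infty)}(s)}{(s+x)^2}\,ds.
$$
Since $h_\alpha$ is nondecreasing and nonnegative, the density $h_\alpha(s)-h_\alpha(s-\beta)1_{[\beta,\infty)}(s)$ is nonnegative. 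It is bounded by $\alpha$, and it vanishes for $s\ge\alpha+\beta$, because there both terms equal $\alpha$. This gives $\eta_{\alpha,\beta}\in\mathcal S_2$.

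There is no real obstacle here. The only point that needs care is verifying the representation of $\log(1+c/x)$, in particular the integration by parts and the vanishing at infinity. That is routine, and the identity is in any case stated in the introduction.
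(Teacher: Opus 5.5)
Your proof is correct and follows the paper's argument. In both, each function is written as $\int_0^\infty \rho(t)(t+x)^{-2}\,dt$ using the $h_c$-representation of $\log(1+c/x)$ and then $\rho\ge 0$ is checked, and the $\eta_{\alpha,\beta}$ part uses the identical shift-and-split with monotonicity of $h_\alpha$. The only difference is cosmetic: the paper first normalizes via $\xi_{\alpha,\beta}(x)=\alpha\,\xi_{1,\beta/\alpha}(x/\alpha)$ and checks $h_c-ch_1\ge 0$, whereas you verify $\alpha h_\beta-\beta h_\alpha\ge 0$ directly, which avoids the scaling step.
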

\begin{proof}
Notice that
\begin{align*}
\xi_{1,c}(x)=\log\left(1+\frac{c}{x}\right)-c\log\left(1+\frac{1}{x}\right)=\int_0^{\infty}\frac{h_c(t)-ch_1(t)}{(t+x)^2}\, dt.
\end{align*}
When $0\leq c\leq 1$ we have $h_c(t)-ch_1(t)\geq 0$ for $t\geq 0$ so $\phi_{1,c}\in \mathcal S_2$ for $0\leq c\leq 1$. For the general case where $0\leq \beta\leq \alpha$ we observe that
$\xi_{\alpha,\beta}(x)=\alpha\xi_{1,\beta/\alpha}(x/\alpha)$.

Concerning $\eta_{\alpha,\beta}$ we have
\begin{align*}
    \eta_{\alpha,\beta}(x)
    &=\int_0^{\infty}\frac{h_{\alpha}(t)}{(t+x)^2}\, dt-\int_0^{\infty}\frac{h_{\alpha}(t)}{(t+\beta+x)^2}\, dt\\
    &=\int_0^{\beta}\frac{h_{\alpha}(t)}{(t+x)^2}\, dt+\int_{\beta}^{\infty}\frac{h_{\alpha}(t)-h_{\alpha}(t-\beta)}{(t+x)^2}\, dt,
\end{align*}
which is in $\mathcal S_2$ since $h_{\alpha}$ is non-decreasing (and non-negative).\end{proof}

Some symmetry and recursion relations are also needed. These are obtained by computation using the functional equation for the Gamma function:
\begin{lemma}
\label{lemma:reduction}
    The relations
    \begin{align}
     \phi_{a+1,b}(x)&=\phi_{a,b}(x)+\xi_{1,b}(x+a) \label{eq:reduction1}\\
     \phi_{a,b+1}(x)&=\phi_{a,b}(x)-\eta_{a,b}(x) \label{eq:reduction2}\\
     \phi_{b,a}(x)&=\phi_{a,b}(x) - \xi_{a,b}(x)\label{eq:reduction3} \\
     \phi_{b-1,a+1}(x)&=\phi_{a,b}(x)-\xi_{a,b-1}(x),\ b\geq 1\label{eq:reduction4}
    \end{align}
    hold for $x>0$.
\end{lemma}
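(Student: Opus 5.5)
The plan is to verify each of the four identities by direct computation from the definition \eqref{eq:phi}. I write
$$
L_{a,b}(x)=\log \frac{\Gamma(x+a+b)\Gamma(x)}{\Gamma(x+a)\Gamma(x+b)},
\qquad\text{so that}\qquad \phi_{a,b}(x)=L_{a,b}(x)-b\log\left(1+\frac{a}{x}\right).
$$
In each case I compute the change in the Gamma part using $\Gamma(z+1)=z\Gamma(z)$. I then compute the change in the elementary logarithmic part separately, and finally match the combination with the definitions of $\xi_{\alpha,\beta}$ and $\eta_{\alpha,\beta}$. Nothing beyond algebra with logarithms is needed; all expressions are well defined for $x>0$ and the parameter ranges considered.

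For \eqref{eq:reduction1}, the functional equation gives
$$
L_{a+1,b}(x)-L_{a,b}(x)=\log\frac{x+a+b}{x+a}=\log\left(1+\frac{b}{x+a}\right).
$$
The elementary part changes by
$$
b\log\frac{x+a+1}{x}-b\log\frac{x+a}{x}=b\log\left(1+\frac{1}{x+a}\right).
$$
The difference of these two is exactly $\xi_{1,b}(x+a)$.

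For \eqref{eq:reduction2}, the functional equation gives $L_{a,b+1}(x)-L_{a,b}(x)=\log(1+a/(x+b))$. The elementary part increases by $\log(1+a/x)$. Hence $\phi_{a,b+1}(x)-\phi_{a,b}(x)=\log(1+a/(x+b))-\log(1+a/x)=-\eta_{a,b}(x)$.

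For \eqref{eq:reduction3}, I use the symmetry $L_{b,a}=L_{a,b}$. This leaves $\phi_{b,a}-\phi_{a,b}=b\log(1+a/x)-a\log(1+b/x)=-\xi_{a,b}$.

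For \eqref{eq:reduction4}, I again use symmetry to write $L_{b-1,a+1}=L_{a+1,b-1}$. The ratio of the Gamma expressions is
$$
\frac{\Gamma(x+a)\Gamma(x+b)}{\Gamma(x+a+1)\Gamma(x+b-1)}=\frac{x+b-1}{x+a},
$$
where $b\geq 1$ ensures that all arguments are positive. Collecting terms gives
$$
\phi_{b-1,a+1}(x)-\phi_{a,b}(x)=\log\frac{x+b-1}{x+a}-(a+1)\log\frac{x+b-1}{x}+b\log\frac{x+a}{x}.
$$
I then rewrite $\log\frac{x+b-1}{x+a}=\log\frac{x+b-1}{x}-\log\frac{x+a}{x}$. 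After this substitution the right-hand side collapses to
$$
-a\log\left(1+\frac{b-1}{x}\right)+(b-1)\log\left(1+\frac{a}{x}\right)=-\xi_{a,b-1}(x).
$$

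The only mildly delicate step is this last bookkeeping in \eqref{eq:reduction4}, where the factor $a+1$ must be split correctly.
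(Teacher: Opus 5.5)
Your proposal is correct and is exactly the direct computation with $\Gamma(z+1)=z\Gamma(z)$ that the paper indicates; the paper itself gives no further details. All four identities check out, including the splitting of the factor $a+1$ in \eqref{eq:reduction4}.
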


\section{Proof of the main result}
As noticed in the introduction the function $g_{a,b}-bh_a$ may change its sign. It is easily seen that $g_{a,b}$ is continuous and bounded (and eventually $1$-periodic). Since $h_a$ is also bounded, integration by parts yields 
$$
\phi_{a,b}(x)
=2\int_0^{\infty}\frac{G_{a,b}(t)-bH_{a}(t)}{(t+x)^3}dt,
$$
where 
\begin{align*}
    G_{a,b}(x)&=\int_0^xg_{a,b}(t)\, dt,\\
    H_{a}(x)&=\int_0^xh_{a}(t)\, dt.
\end{align*}
The main part of the proof of Theorem \ref{thm:main} is contained in the next two lemmas. Their proofs are given after the proof of the main result.
\begin{lemma}
    \label{lemma:essential1} The function $\phi_{a,b}$ belongs to $\mathcal S_3$ when $0<a<b<1$.
\end{lemma}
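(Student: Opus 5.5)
The plan is to use the integrated representation
$$
\phi_{a,b}(x)=2\int_0^{\infty}\frac{G_{a,b}(t)-bH_a(t)}{(t+x)^3}\,dt
$$
from the beginning of this section. It then suffices to prove
$$
D(t):=G_{a,b}(t)-bH_a(t)\geq 0\quad\text{for all } t\geq 0,\qquad 0<a<b<1.
$$
Once this holds, $\phi_{a,b}\in\mathcal S_3$ with $c=0$ and $d\nu(t)=2D(t)\,dt$.

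\emph{Explicit form of $g_{a,b}$ and $h_a$.} Put $k=1_{(0,a)}\ast1_{(0,b)}$. For $a<b$ this is the trapezoid
$$
k(s)=\begin{cases} s, & 0\le s\le a,\\ a, & a\le s\le b,\\ a+b-s, & b\le s\le a+b,\\ 0, & s\ge a+b.\end{cases}
$$
Its integral is $ab$, and its support $[0,a+b]$ lies in $[0,2)$. Hence for $t\geq 1$ all relevant shifts are present, and $g_{a,b}(t)=g_{\rm per}(t)$, where $g_{\rm per}(s)=k(s)+k(s+1)$ for $s\in[0,1)$, extended $1$-periodically. This function has mean $ab$ over a period. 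Moreover $bh_a(t)=ab$ for $t\geq a$. Consequently $D$ is $1$-periodic on $[1,\infty)$, and it suffices to check $D\ge0$ on $[0,2]$.

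\emph{The interval $[0,1]$.} Here $g_{a,b}-bh_a$ equals $(1-b)t\ge0$ on $[0,a]$ and $a(1-b)\geq0$ on $[a,b]$. On $[b,1]$ the integrand is $k(t)-ab$, which is decreasing. So $D$ first increases from $D(0)=0$ and then decreases, and its minimum on $[0,1]$ is $\min(0,D(1))$. A direct computation gives
$$
D(1)=ab-\tfrac12 q_+^2-b\bigl(a-\tfrac{a^2}{2}\bigr)=\tfrac12\bigl(a^2b-q_+^2\bigr),\qquad q=a+b-1.
$$
Since $ab-q=(1-a)(1-b)\ge0$, we get $q_+\le ab\le a\sqrt b$, and therefore $D(1)\geq0$.

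\emph{One period $[1,2]$.} I split into two cases.

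If $a+b\le1$, then $g_{\rm per}=k$ on $[0,1)$. It exceeds $ab$ on an initial interval and lies below $ab$ afterwards. Hence the minimum of $D$ over the period is attained at the endpoints, where $D=D(1)=a^2b/2>0$.

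If $a+b>1$, then $g_{\rm per}(s)=s+(q-s)=q$ on $[0,q]$, and $q\le ab$ there. On $[q,a]$ we have $g_{\rm per}(s)=s$, which crosses $ab$ upward at $s=ab$; note $q\le ab\le a$. After that, $g_{\rm per}$ follows $k$ and eventually drops below $ab$ again. Thus the interior minimum is at $t=1+ab$. Writing $p=(1-a)(1-b)$, so that $ab=p+q$, this minimum equals
$$
D(1+ab)=D(1)-pq-\tfrac12p^2=\tfrac12\bigl(a^2b-(p+q)^2\bigr)=\tfrac12a^2b(1-b)>0 .
$$
The other candidate minimum is the endpoint value $D(1)\ge0$, already handled. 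This completes $D\ge0$ on $[0,\infty)$.

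The main obstacle is this last step: the bookkeeping of $g_{\rm per}$ when $a+b>1$, locating the minimum of $D$ over a period, and recognizing the algebraic simplification that makes the minimum value manifestly positive.
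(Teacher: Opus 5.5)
Your overall strategy matches the paper's: prove $D=G_{a,b}-bH_a\ge 0$ on an initial stretch, then extend by periodicity. Two of your choices streamline the paper's argument. You get periodicity on $[1,\infty)$ from the mean-zero property of $g_{\rm per}-ab$, so $D(2)=D(1)$ comes for free. You also handle $[0,1]$ by a concavity argument together with $q_+\le ab\le a\sqrt b$. Together these avoid Lemma~\ref{lemma:pos} and the check of $\tau_2$ at $x=a+b$.

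However, your treatment of the period $[1,2]$ in the case $a+b\le 1$ is wrong. You claim that $g_{\rm per}=k$ exceeds $ab$ on an initial interval. But $k(s)=s$ on $[0,a]$, so $g_{\rm per}(s)-ab<0$ on $[0,ab)$. The sign pattern of $g_{\rm per}-ab$ on $[0,1)$ is therefore $-,+,-$, with crossings at $s=ab$ and $s=a+b-ab$. This is the same pattern as in your case $a+b>1$, only without the flat piece on $[0,q]$. Consequently $D$ decreases on $[1,1+ab]$, and the minimum over the period is not at the endpoints. It is the interior value
\[
D(1+ab)=D(1)+\int_0^{ab}(s-ab)\,ds=\tfrac12a^2b-\tfrac12a^2b^2=\tfrac12a^2b(1-b),
\]
which is exactly what the paper obtains from \eqref{eq:No2}.

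This value is still positive, so the lemma survives. As written, though, the step asserts a false monotonicity and a false minimum value $\tfrac12a^2b$. Replace it with the same interior-minimum computation you did correctly for $a+b>1$. In fact both cases give the minimum $\tfrac12a^2b(1-b)$ at $t=1+ab$, so the two cases can be merged into one argument.
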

\begin{lemma}
    \label{lemma:essential2} The function $-\phi_{a,b}$ belongs to $\mathcal S_3$ when $0<a<1$ and $1<b<a+1$.
\end{lemma}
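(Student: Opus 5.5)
The plan is to verify the sign condition directly in the integral representation
$$
-\phi_{a,b}(x)=2\int_0^{\infty}\frac{bH_a(t)-G_{a,b}(t)}{(t+x)^3}\,dt .
$$
By the characterization of $\mathcal S_3$, it suffices to show that $F(t):=bH_a(t)-G_{a,b}(t)\geq 0$ for all $t\geq 0$. Then $-\phi_{a,b}\in\mathcal S_3$ with $c=0$ and $d\nu=2F(t)\,dt$. The recursions of Lemma \ref{lemma:reduction} do not seem to help here. For instance, \eqref{eq:reduction4} expresses $-\phi_{a,b}$ as $-\phi_{b-1,a+1}-\xi_{a,b-1}$. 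With $c=b-1\in(0,a)$ the term $\xi_{a,c}$ lies in $\mathcal S_2$, but it enters with the wrong sign, and $\phi_{c,a+1}$ is again outside the range of Lemma \ref{lemma:essential1}. So I would work with $F$ explicitly.

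\textbf{Step 1: explicit formulas.} Put $u=1_{(0,a)}\ast 1_{(0,b)}$. Since $a<b$, $u(s)=s$ on $[0,a]$, $u(s)=a$ on $[a,b]$, $u(s)=a+b-s$ on $[b,a+b]$, and $u=0$ afterwards, and $\int u=ab$. Also $h_a$ and $H_a(t)=t^2/2$ for $t\le a$, $H_a(t)=at-a^2/2$ for $t\ge a$ are explicit. Hence $g_{a,b}(t)=\sum_{0\le k\le t}u(t-k)$. Because $a+b<2a+1<3$, at most three translates of $u$ contribute at any point, and $G_{a,b}$ is piecewise quadratic with breakpoints among $a,\,1,\,b,\,1+a,\,a+b,\dots$.

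\textbf{Step 2: reduction to a compact interval.} Computing as in the summation above,
$$
\int_t^{t+1}g_{a,b}(s)\,ds=\sum_{k\ge 0}\int_{t-k}^{t+1-k}u(s)\,ds=\int_0^{t+1}u(s)\,ds ,
$$
which equals $ab$ as soon as $t+1\ge a+b$, i.e.\ $t\ge b-1$. Likewise $b\,(H_a(t+1)-H_a(t))=ab$ for $t\ge a$. Since $b-1<a$, it follows that $F(t+1)=F(t)$ for all $t\ge a$. Therefore it is enough to prove $F\ge 0$ on $[0,1+a]$.

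As a consistency check, the asymptotics $x^2\phi_{a,b}(x)\to ab(1-b)/2<0$ say that the mean of $F$ over a period is $ab(b-1)/2>0$, so positivity of $F$ is plausible.

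\textbf{Step 3 (main obstacle): checking $F\ge0$ on $[0,1+a]$.} On $[0,a]$ we have $g=h_a=t$, so $F(t)=(b-1)t^2/2\ge0$. On $[a,1+a]$, subdivide at the points $1$, $b$ (note $1<b<1+a$) and possibly $a+b$ if it is less than $1+a$ (it is not, since $b>1$). On each piece $F$ is an explicit quadratic in $t$, $a$, $b$.

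Since $F'=bh_a-g_{a,b}$, the minima of $F$ occur either at the endpoints or where $g_{a,b}$ crosses the constant $ab$. So I would first locate the sign changes of $bh_a-g_{a,b}=ab-g_{a,b}$ on $[a,1+a]$. Then I would show that $F$ is nonnegative at the resulting local minima, using the constraints $0<a<1<b<1+a$.

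I expect this last inequality check to be the delicate part. The likely critical point lies in $[1,1+a]$, where $g_{a,b}(t)=u(t)+u(t-1)$ and the descending piece of $u$ meets the rising piece of its translate. There the inequality should reduce to a polynomial inequality in $a$ and $c=b-1\in(0,a)$, which I would try to factor so that the factors $c$ and $a-c$ appear explicitly.
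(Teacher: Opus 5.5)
Your strategy (prove $F=bH_a-G_{a,b}\ge 0$ from explicit piecewise formulas, then propagate along integer translates) is the paper's. However, Step~2 contains a false claim. The condition $t+1\ge a+b$ means $t\ge a+b-1$, not $t\ge b-1$, and since $b>1$ we have $a+b-1>a$. So $F(t+1)=F(t)$ holds only for $t\ge a+b-1$. On $[a,a+b-1)$ your own identity gives $F(t+1)-F(t)=ab-\int_0^{t+1}u>0$; for instance $F(1+a)-F(a)=\int_{1+a}^{a+b}(a+b-s)\,ds=\tfrac12(b-1)^2\neq 0$.

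Taken literally, exact periodicity would force you to check $F\ge0$ on $[0,a+b]$. That interval contains $[1+a,a+b]$, where the genuinely nontrivial interior minimum (at $t=2a+b-ab$) lies. The paper deals with this by verifying positivity on all of $[0,1+b]$, with a case split $a+b<2$ versus $a+b>2$ on $[1+a,1+b]$. It then uses the identity $F(1+b)=F(b)$ together with exact $1$-periodicity of $bh_a-g_{a,b}$ on $(b,\infty)$.

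The repair is one line, and it gives a shorter proof than the paper's. Since $u\ge0$ and $\int u=ab$, you have $F(t+1)-F(t)=ab-\int_0^{t+1}u\ge 0$ for every $t\ge a$. Hence $F\ge0$ on $[a,1+a]$ propagates to $[a,\infty)$ by induction. So your reduction to $[0,1+a]$ is valid after all, but with an inequality, not an equality.

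Step~3, which you only outline, is then easy rather than delicate. The values of $g_{a,b}$ on the three pieces are:
\begin{itemize}
\item on $[a,1]$, $g_{a,b}=a$;
\item on $[1,b]$, $g_{a,b}(t)=a+t-1$;
\item on $[b,1+a]$, $g_{a,b}=(a+b-t)+(t-1)=a+b-1$, which exceeds $ab$ because $a+b-1-ab=(1-a)(b-1)>0$.
\end{itemize}
Thus $F'=ab-g_{a,b}$ is positive up to $t=1+a(b-1)\in(1,b)$ and negative afterwards. The only interior critical point is therefore a maximum, and
$$\min_{[a,1+a]}F=\min\{F(a),F(1+a)\}=\min\{\tfrac12(b-1)a^2,\ \tfrac12(b-1)(a^2+b-1)\}>0.$$
Together with $F=\tfrac12(b-1)t^2$ on $[0,a]$, this completes the proof. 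The critical point you anticipated in $[1,1+a]$, where the falling piece of $u$ meets the rising translate, does not give a minimum: there $g_{a,b}$ is constant and larger than $ab$.

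Compared with the paper, the monotone-translate argument replaces the exact identity $F(1+b)=F(b)$ and the two-case analysis on $[1+a,1+b]$ by a one-sided inequality, which is all that positivity requires. This trick depends on $b>1$. In Lemma~\ref{lemma:essential1} the same computation has the opposite sign for $G-bH$, so there one needs exact periodicity from $a$; that holds in that lemma because $a+b-1<a$ when $b<1$.
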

\begin{rem}We have found it most intuitive to state the lemmas assuming strict inequalities between the parameters. This causes no restriction since the Stieltjes classes are closed under pointwise convergence. 
\end{rem}
\begin{proof}[Proof of Theorem \ref{thm:main}.]
   In the case where $0<b<1$ we argue as follows. Since $\xi_{1,b}$ belongs to $\mathcal S_2$ by Lemma \ref{lemma:xi} it follows from \eqref{eq:reduction1} that it is enough to show $\phi_{a,b}\in \mathcal S_3$ for $0<a,b<1$. If $a<b$ this is exactly the assertion of Lemma \ref{lemma:essential1}. If $a>b$ we use \eqref{eq:reduction3} to get 
   $$
   \phi_{a,b}(x)=\phi_{b,a}(x) +\xi_{a,b}(x).
   $$
   Since $\xi_{a,b}\in \mathcal S_2$ by Lemma \ref{lemma:xi} and $\phi_{b,a}\in \mathcal S_3$ by Lemma \ref{lemma:essential1} it follows that $\phi_{a,b}\in \mathcal S_3$ for $a> b$.

   In the situation where $b>1$ we let $\psi_{a,b}=-\phi_{a,b}$ and show that $\psi_{a,b}$ belongs to $\mathcal S_3$. From \eqref{eq:reduction2} and Lemma \ref{lemma:xi} it can be assumed that $1<b<2$. Furthermore, using \eqref{eq:reduction1} and Lemma \ref{lemma:xi} we may also assume $0<a<1$. If $b<1+a$ then the assertion follows from Lemma \ref{lemma:essential2}. If $b>1+a$  \eqref{eq:reduction4} is used to obtain
   $$
   \psi_{a,b}(x)=\psi_{b-1,a+1}(x)+\xi_{b-1,a}(x).
   $$
   Here, $a'=b-1\in (0,1)$, $b'=a+1\in (1,2)$ and $b'<a'+1$ so that (by Lemma \ref{lemma:essential2} again) and Lemma \ref{lemma:xi} we also have $\psi_{a,b}\in \mathcal S_3$.
\end{proof}

Before we give the proofs of Lemma \ref{lemma:essential1} and Lemma \ref{lemma:essential2} it is perhaps worthwhile to show the graphs of $1_{(0,a)}\ast1_{(0,b)}$ and $h_a$. See Figure \ref{fig:g-and-h}.
\begin{figure}
   \begin{center}
\begin{tabular}{cc}\includegraphics[scale=0.25]{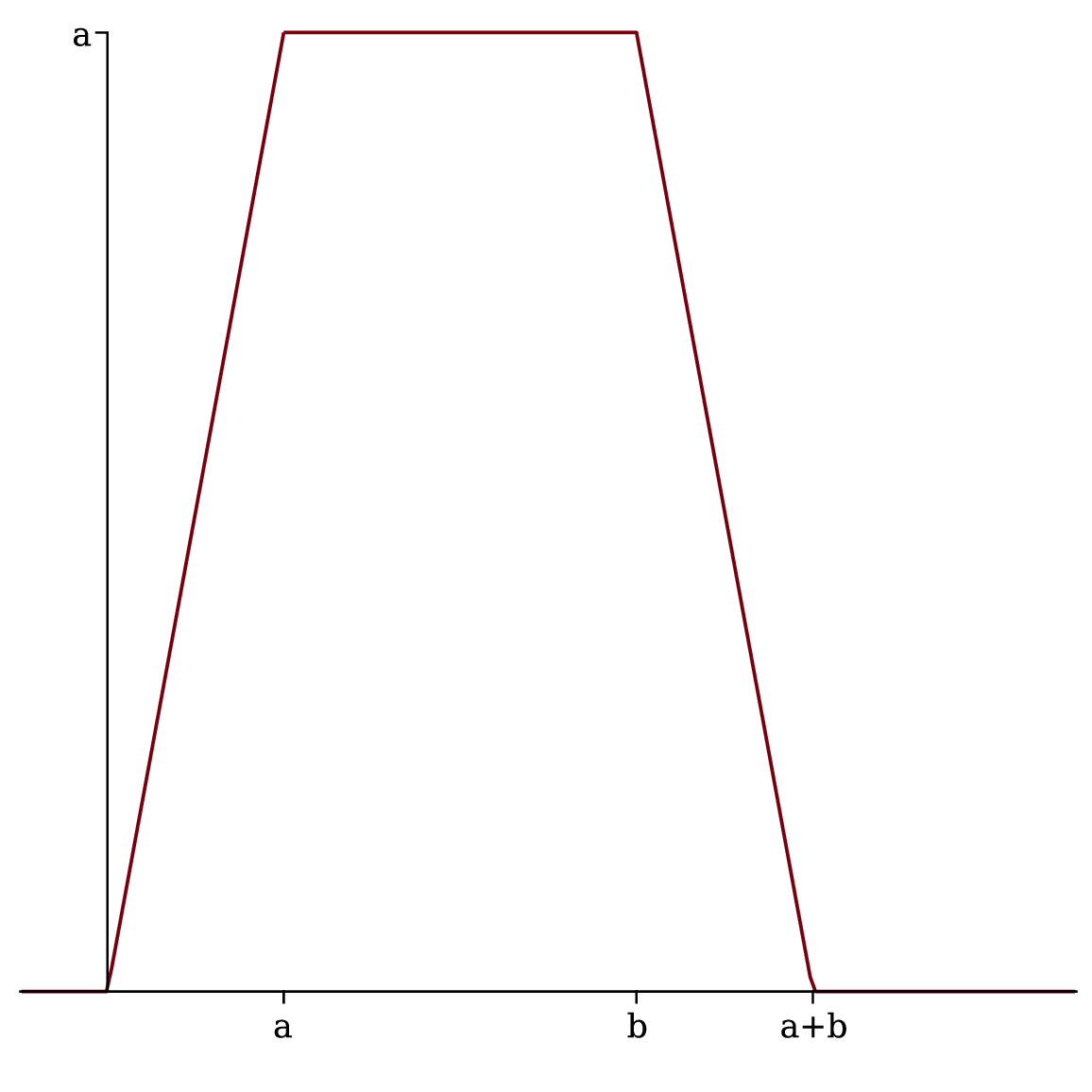} & \includegraphics[scale=0.25]{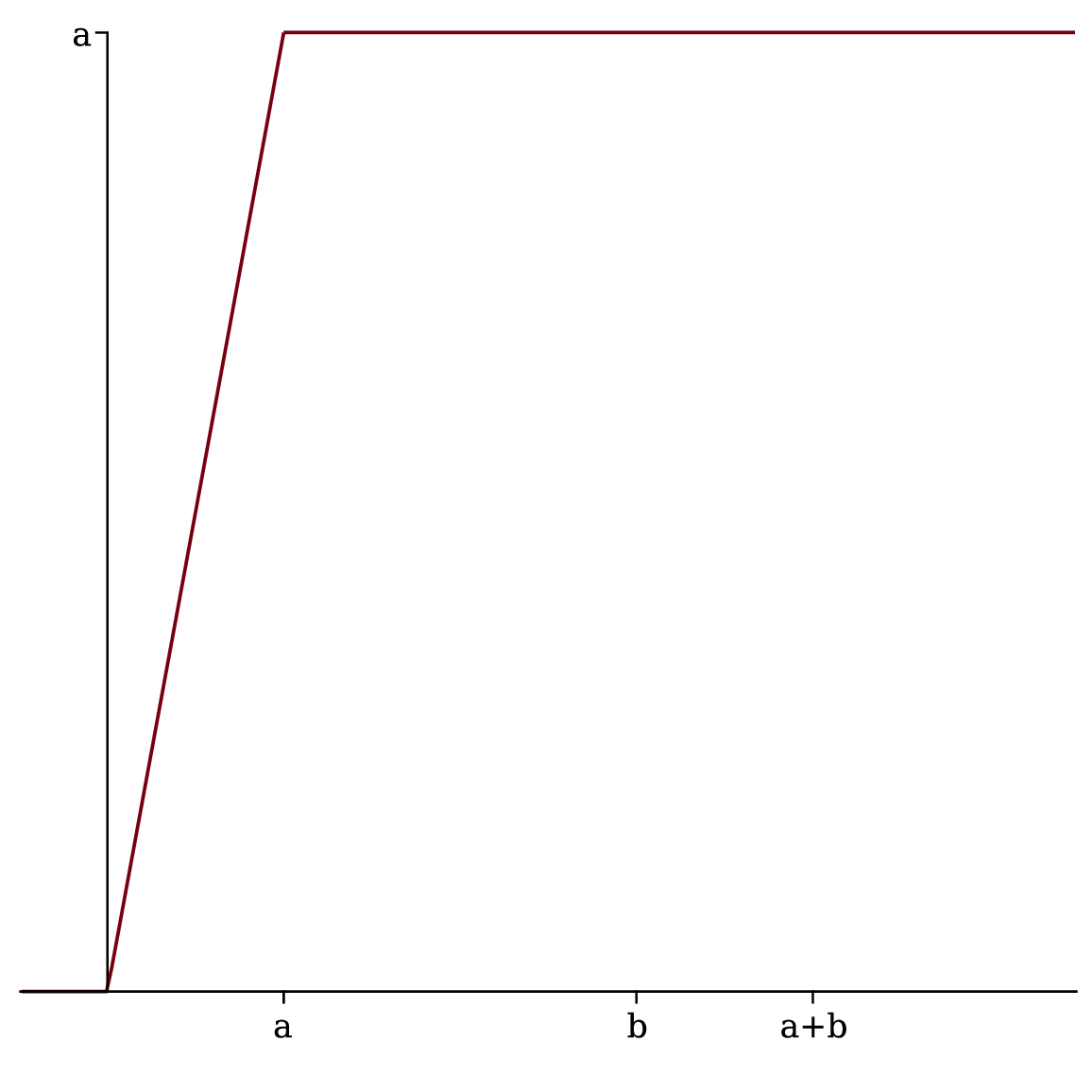}
\end{tabular}
\end{center}
\caption{$1_{(0,a)}\ast1_{(0,b)}$ and  $h_a$}
\label{fig:g-and-h}
\end{figure}

\begin{proof}[Proof of Lemma \ref{lemma:essential1}] To ease notation let $g=g_{a,b}$, $G=G_{a,b}$, $h=h_a$ and $H=H_a$. The function $g$ is $1$-periodic on $(a,\infty)$:
\begin{align*}
    g(t+1+a)&=\sum_{k=0}^{[t+1+a]}\left(1_{(0,a)}\ast1_{(0,b)}\right)(t+1+a-k)\\
&=\left(1_{(0,a)}\ast1_{(0,b)}\right)(t+1+a)+\sum_{k=0}^{[t+a]}\left(1_{(0,a)}\ast1_{(0,b)}\right)(t+a-k)\\
&=g(t+a),\quad\text{for}\ t\geq 0,
\end{align*}
since $t+1+a\geq 1+a\geq a+b$ so that $\left(1_{(0,a)}\ast1_{(0,b)}\right)(t+1+a)=0$. 

We proceed to verify that $G(x)-bH(x)\geq 0$ for $x\in [0,1+a]$. Since $h=g$ on $[0,b]$ it it clear that $G(x)-bH(x)\geq 0$ for $0\leq x\leq b$.

Elementary computations show
$$
H(x)=\left\{\begin{array}{ll}
     x^2/2,& 0\leq x\leq a, \\
     a(x-a/2),& x\geq a.
\end{array}\right.$$
In particular
\begin{align}
\label{eq:No1}
    G(a)-bH(a)&=(1-b)a^2/2,\\
    G(b)-bH(b)&=(1-b)(ab-a^2/2). \nonumber
\end{align} 
The computation of the integral $G(x)$ is more involved for $b\leq x\leq 1+a$. 
It is convenient to split the investigation into two cases, one where $a+b<1$, and another where $a+b>1$.

\begin{itemize}
\item First assume that  $a+b<1$.
For $x\in [b,a+b]$ we find, since $g(t)=a+b-t$ for $t\in [b,a+b]$ that 
\begin{align*}
G(x)-bH(x)&=G(b)-bH(b)+\int_b^x(a+b-t-ab)\, dt\\
&=G(b)-bH(b)+(a+b-ab)(x-b)-x^2/2+b^2/2.
\end{align*}
The minimal value of this polynomial on $[b,a+b]$ is taken at $x=b$ or at $x=a+b$. We have already seen that the value at $b$ is positive, and so is the value at $a+b$: a computation shows that
$$
G(a+b)-bH(a+b)=ab(1-b-a/2)>0,
$$
since $1-b>a$.

For $x\in [a+b,1]$, $G(x)-bH(x)=G(a+b)-bH(a+b)-ab(x-(a+b))\geq G(1)-bH(1)$ because $g$ is zero on this interval. Furthermore,  
$G(1)-bH(1)=a^2b/2>0$.

For $x\in[1,1+a]$ we have, using that $g(t)=g(t-1)$ on this interval,
\begin{align}
G(x)-bH(x)&=G(1)-bH(1)+\int_1^x(g(t)-ab)dt\nonumber \\
&=G(1)-bH(1)+(x-1)^2/2-ab(x-1).\label{eq:No2}
\end{align}
This expression takes its minimal value at $x=1+ab\in(1,1+a)$ and this value equals
$(1-b)ba^2/2$, which is positive.


\item Next consider the  case where $a+b>1$.
For $x\in [b,1]$ we have as before
$$
G(x)-bH(x)=G(b)-bH(b)+(a+b-ab)(x-b)-x^2/2+b^2/2.
$$
Its minimal value is taken at $x=b$ or at $x=1$. The value at $x=b$ is positive and a computation shows that
\begin{align*}
  G(1)-bH(1)&=-a^2/2-b^2/2-1/2+a^2b/2+a+b-ab.
\end{align*}
This is, by Lemma \ref{lemma:pos}, a positive expression. 

For $x\in [1,a+b]$, it is found that 
$$
G(x)-bH(x)=G(1)-bH(1)+(a+b-1-ab)(x-1).
$$
The assumption $a+b>1$ gives us $a+b-ab<1$ and therefore the expression above decreases as $x$ increases. Thus its minimal value is taken at $x=a+b$ and equals
\begin{equation}
\label{eq:pos-2}
G(a+b)-bH(a+b)=a^2/2+b^2/2+2ab-ab^2-a^2b/2-a-b+1/2.
\end{equation}
This, also, is positive by Lemma \ref{lemma:pos}. 

For $x\in[a+b,1+a]$ we have 
\begin{align}
G(x)-bH(x)&=G(a+b)-bH(a+b)+(x-1)^2/2\nonumber \\
&\phantom{=}\ -(a+b-1)^2/2-ab(x-a-b).\label{eq:No3}
\end{align}
This expression takes its minimal value at $x=1+ab\in(a+b,1+a)$ and this value is easily seen to equal the positive quantity
$a^2b(1-b)/2$.

\end{itemize}
For $x\geq 1+a$ the argument goes as follows. From the formulas \eqref{eq:No1}, \eqref{eq:No2} and \eqref{eq:No3} it follows that $G(1+a)-bH(1+a)=G(a)-bH(a)$, and this is important: Since  both $g$ and $h$ are $1$-periodic on $(a,\infty)$ we obtain
\begin{align*}
    G(x)-bH(x)&=G(1+a)-bH(1+a)+\int_{1+a}^x\left(g(t)-bh(t)\right)\, dt\\
    &=G(a)-bH(a)+\int_{a}^{x-1}\left(g(t)-bh(t)\right)\, dt\\
    &=G(x-1)-bH(x-1).
\end{align*}
Therefore, $G-bH$ is also $1$-periodic on $[a,\infty)$ and since it is positive on $[a,a+1]$ this finally yields the positivity of $G-bH$ on the positive line.\end{proof}
We denote $\tau_1$ and $\tau_2$ by
\begin{align*}
    \tau_1(a,b)&=-a^2/2-b^2/2-1/2+a^2b/2+a+b-ab,\\
    \tau_2(a,b)&=a^2/2+b^2/2+2ab-ab^2-a^2b/2-a-b+1/2.
\end{align*}
\begin{lemma}
    \label{lemma:pos}
    The functions $\tau_1$ and $\tau_2$ are positive in the region $\{(a,b)\, | \, 0<a<b<1, a+b>1\}$.
\end{lemma}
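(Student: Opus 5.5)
The plan is to substitute $u=1-a$ and $v=1-b$, expand both polynomials in the new variables, and check that each one factors as $v/2$ times an elementary expression whose positivity is easy to see. In these variables the region $\{0<a<b<1,\ a+b>1\}$ becomes
$$
0<v<u<1,\qquad u+v<1 .
$$
The substitution is suggested by the symmetric role of $1-b$ in the values $a^2b(1-b)/2$ found in the proof of Lemma \ref{lemma:essential1}. The useful building blocks are $a+b-ab=1-uv$, $a^2+b^2=2-2u-2v+u^2+v^2$ and $a^2b=(1-u)^2(1-v)$, together with the analogous expansion of $ab^2$.

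For $\tau_1$ I group terms as $-\tfrac12(a^2+b^2+1)+\tfrac12a^2b+(a+b-ab)$ and insert the expansions. The constant terms and the terms linear in $u$ should cancel, leaving
$$
\tau_1(a,b)=\frac{v}{2}\,\bigl(1-v-u^2\bigr).
$$
Positivity then follows from $v>0$ together with $u^2<u<1-v$, using $0<u<1$ and $u+v<1$.

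For $\tau_2$ I expand term by term in the same way. I expect the constants, the linear terms in $u$ and the $u^2$ terms all to cancel, giving
$$
\tau_2(a,b)=\frac{v}{2}\,\bigl(1-v-2u+2uv+u^2\bigr)=\frac{v}{2}\Bigl((1-u)^2-v(1-2u)\Bigr).
$$
If $u\ge 1/2$, both summands in the bracket are nonnegative and $(1-u)^2>0$, so the bracket is positive. If $u<1/2$, then $1-2u>0$, and the constraint $v<u$ (equivalently $a<b$) gives
$$
(1-u)^2-v(1-2u)>(1-u)^2-u(1-2u)=1-3u+3u^2 .
$$
This quadratic is positive for all real $u$, since its discriminant $9-12$ is negative. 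Together with $v>0$, this shows $\tau_2>0$ on the whole region.

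The main obstacle is purely bookkeeping: expanding $\tau_2$ correctly, in particular the $-ab^2$ and $-a^2b/2$ terms, so that the factor $v$ emerges. I would double-check the factored forms numerically at a sample point such as $(a,b)=(0.6,0.8)$, i.e.\ $(u,v)=(0.4,0.2)$, before writing the final argument. In this proof $\tau_2$ is the case that uses the hypothesis $a<b$, whereas $\tau_1$ needs only $a+b>1$.
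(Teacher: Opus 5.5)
Your proof is correct. In your variables both factorizations, $\tau_1=\tfrac v2(1-v-u^2)$ and $\tau_2=\tfrac v2\bigl((1-u)^2-v(1-2u)\bigr)$, are right (they match your check at $(u,v)=(0.4,0.2)$). Your positivity arguments are also valid.

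The paper likewise factors out $1-b$, but then fixes $b$ and treats each cofactor as a quadratic in $a$. For $\tau_1$, the cofactor $-a^2/2+a-(1-b)/2$ is increasing for $a<1$. For $\tau_2$, the cofactor $\tfrac12\bigl(a^2-2(1-b)a+(1-b)\bigr)$ is an upward parabola with vertex at $a=1-b$. In both cases the minimum over $a\in(1-b,b)$ is at the endpoint $a=1-b$, which gives the uniform bound $\tau_i\ge b(1-b)^2/2$.

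Your direct inequalities give positivity without this explicit bound. Your case split for $\tau_2$ can also be avoided. Completing the square gives $(1-u)^2-v(1-2u)=(1-u-v)^2+v(1-v)$, which is the same observation as the paper's vertex at $a=1-b$. So $\tau_2>0$ whenever $0<b<1$, using neither $a<b$ nor $a+b>1$. Even inside your case $u<1/2$, the bound $v<1-u$ would have served in place of $v<u$. Your closing remark that $\tau_2$ uses $a<b$ is therefore true of your argument, but the statement itself does not need that hypothesis.
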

\begin{proof}
    Fix $b\in (1/2,1)$ and consider $\tau_1$ as a polynomial of the variable $a$
    $$
    \tau_1(a,b)=(1-b)(-a^2/2+a-(1-b)/2).
    $$
    It takes its maximum at $a=1$ and thus its minimum for $a\in (1-b,b)$ is attained at $a=1-b$. We find 
    $\tau_1(1-b,b)=b(1-b)^2/2$.
    By a similar argument it is found that $\tau_2(a,b)\geq b(1-b)^2/2$. 
\end{proof}
\begin{proof}[Proof of Lemma \ref{lemma:essential2}.] We assume that $0<a<1$ and $1<b<1+a$ and put $\psi=-\phi_{a,b}$ (and maintain the notation $g=g_{a,b}$, $G=G_{a,b}$, $h=h_{a}$ and $H=H_{a}$). Recalling the relation \eqref{eq:phi-int},
we aim at showing positivity of $bH-G$. The arguments are similar to those given in the proof of Lemma \ref{lemma:essential1}, and therefore we shall not give all details.

Computations show
$$
bH(x)-G(x)=
\left\{
\begin{array}{ll}
\tfrac{1}{2}(b-1)x^2, &x\in[0,a]\\
a(b-1)x-\tfrac{1}{2}(b-1)a^2, &x\in [a,1]\\
a(b-1)x-\tfrac{1}{2}(x-1)^2-\tfrac{1}{2}(b-1)a^2, &x\in [1,b]\\
(ab-a-b+1)x+\tfrac{1}{2}\left(a^2+b^2-1-ba^2\right), &x\in [b,1+a].
\end{array}
\right.$$
It is clear from this that $bH-G$ is positive on $[0,a]$, and that the minimum for $bH-G$ on $[a,1]$ is attained at $x=a$. The minimum of the quadratic in the third line above is taken at $x=1$ or at $x=b$. Here it is only needed to compute the value at $x=b$, which equals
$$
a(b-1)b-\tfrac{1}{2}(b-1)^2-\tfrac{1}{2}(b-1)a^2=(b-1)\left(ab-\tfrac{1}{2}(b-1)-\tfrac{1}{2}a^2\right).
$$
Using $b-1<a$ and $b>1$  one finds that this is a positive quantity. On the interval $[b,1+a]$ the minimum of $bH-G$ is attained at $x=1+a$. The minimal value  equals the positive value
$$
\tfrac{1}{2}a^2(b-1)+\tfrac{1}{2}(b-1)^2.
$$
Thus positivity is verified on the interval $[0,1+a]$.
The investigation for $x\in [1+a,1+b]$ is split into two cases. 
\begin{itemize} 
\item First the case where $a+b<2$ is considered. We find, for $x\in [1+a,a+b]$, 
\begin{align}
bH(x)-G(x)&=bH(1+a)-G(1+a)\nonumber\\
&+(ab-2a-b)(x-1-a)+\tfrac{1}{2}x^2-\tfrac{1}{2}(1+a)^2.\label{eq:same-formula}
\end{align}
This quadratic polynomial takes it minimal value at $x=2a+b-ab\in [1+a,a+b]$ and the value is (after some computation) found to equal the positive quantity
$$
\tfrac{1}{2}a(b-1)(2b-2+(2-b)a).
$$
For $x\in [a+b,2]$,
\begin{align*}
bH(x)-G(x)&=bH(a+b)-G(a+b)+\int_{a+b}^x(ba-a)\, dt.
\end{align*}
Since this expression increases with $x$, its minimal value is taken at $x=a+b$ and this value is larger than (as we saw above) the positive value $bH(2a+b-ab)-G(2a+b-ab)$.

For $x\in [2,1+b]$,
\begin{align*}
bH(x)-G(x)&=bH(2)-G(2)+\int_{2}^x(ba+2-a-t)\, dt\\
&=bH(2)-G(2)+(ab-a+2)x-\tfrac{1}{2}x^2-2ab+2a-2.
\end{align*}
The minimal value is attained at $x=2$ or at $x=1+b$. The value at $x=2$ is positive and a computation furthermore shows
 that $bH(1+b)-G(1+b)=bH(b)-G(b)$. Hence, $bH-G$ is positive on $[0,1+b]$.
\item We turn to the case $a+b>2$. For $x\in [1+a,2]$, \eqref{eq:same-formula} holds.
 As before, the minimal value, attained at $x=2a+b-ab$, is positive. We next consider the behavior on the intervals $[2,a+b]$ and $[a+b,1+b]$.
For $2\leq x\leq a+b$,
$$
bH(x)-G(x)=bH(2)-G(2)+(ab-2a-b+2)(x-2),$$
and since this expression increases with $x$ its minimal value is attained at $x=2$. From the investigation on the interval $[1+a,2]$ we obtain that this value is positive. (And by computation it is seen that 
$$
bH(2)-G(2)=a^2+2-2b-\tfrac{1}{2}a^2b+\tfrac{1}{2}b^2+2ab-3a.) 
$$ 
For $a+b\leq x\leq 1+b$,
\begin{align*}
bH(x)-G(x)&=bH(a+b)-G(a+b)\\
&+(ab-a+2)(x-a-b)-\tfrac{1}{2}x^2+\tfrac{1}{2}(a+b)^2.
\end{align*}
Its minimal value is taken at $x=a+b$ or at $x=1+b$. We know form above that the value at $x=a+b$ is positive. A computation shows that the value at $x=1+b$ is the same as the value at $x=b$.
\end{itemize}
We have thus obtained the positivity of $bH-G$ on the interval $[0,1+b]$. Positivity on the entire half-line follows using the $1$-periodicity of $bh-g$ on $(b,\infty)$ in the same way as in the proof of Lemma \ref{lemma:essential1}.
\end{proof}

\noindent
Henrik Laurberg Pedersen\\
Department of Mathematical Sciences\\
University of Copenhagen\\
Universitetsparken 5\\
DK-2100, Denmark\\
email: henrikp@math.ku.dk
\end{document}